\numberwithin{equation}{section}
\theoremstyle{plain}
\newtheorem{thm}{Theorem}[section]
\theoremstyle{definition}
\newtheorem{dfn}{Definition}[section]
\theoremstyle{remark}
\newtheorem{rem}{Remark}[section]
\DeclareMathOperator{\td}{d}
\begin{document}

\title[Explicit formulae for Euler numbers and polynomials]
{Explicit formulae for computing Euler polynomials in terms of Stirling numbers of the second kind}

\author[B.-N. Guo]{Bai-Ni Guo}
\address[Guo]{School of Mathematics and Informatics, Henan Polytechnic University, Jiaozuo City, Henan Province, 454010, China}
\email{\href{mailto: B.-N. Guo <bai.ni.guo@gmail.com>}{bai.ni.guo@gmail.com}, \href{mailto: B.-N. Guo <bai.ni.guo@hotmail.com>}{bai.ni.guo@hotmail.com}}
\urladdr{\url{http://www.researcherid.com/rid/C-8032-2013}}

\author[F. Qi]{Feng Qi}
\address[Qi]{College of Mathematics, Inner Mongolia University for Nationalities, Tongliao City, Inner Mongolia Autonomous Region, 028043, China; Department of Mathematics, College of Science, Tianjin Polytechnic University, Tianjin City, 300387, China}
\email{\href{mailto: F. Qi <qifeng618@gmail.com>}{qifeng618@gmail.com}, \href{mailto: F. Qi <qifeng618@hotmail.com>}{qifeng618@hotmail.com}, \href{mailto: F. Qi <qifeng618@qq.com>}{qifeng618@qq.com}}
\urladdr{\url{http://qifeng618.wordpress.com}}

\begin{abstract}
In the paper, the author elementarily unifies and generalizes eight identities involving the functions $\frac{\pm1}{e^{\pm t}-1}$ and their derivatives. By one of these identities, the author establishes two explicit formulae for computing Euler polynomials and two-parameter Euler polynomials, which are a newly introduced notion, in terms of Stirling numbers of the second kind.
\end{abstract}

\subjclass[2010]{11B68, 11B73, 33B10}

\keywords{explicit formula; Euler number; Euler polynomial; two-parameter Euler polynomial; Stirling number of the second kind; identity; exponential function; unification; generalization}


\maketitle

\section{Introduction}

In~\cite{exp-derivative-sum-Combined.tex}, the following eight identities were elementarily and inductively established.

\begin{thm}[{\cite[Theorems~2.1 to~2.4 and Corollaries~2.1 to~2.4]{exp-derivative-sum-Combined.tex}}]\label{exp-deriv-exp-lem}
For $k\in\mathbb{N}$, we have
\begin{align}\label{exp-deriv-exp}
\biggl(\frac1{e^t-1}\biggr)^{(k)}&=\sum_{m=1}^{k+1}\lambda_{k,m}\biggl(\frac1{e^t-1}\biggr)^m,&
\biggl(\frac1{1-e^{-t}}\biggr)^{(k)}&=\sum_{m=1}^{k+1}\mu_{k,m}\biggl(\frac1{1-e^{-t}}\biggr)^m,\\
\biggl(\frac1{1-e^{-t}}\biggr)^{(k)}&=\sum_{m=1}^{k+1}\lambda_{k,m}\biggl(\frac1{e^t-1}\biggr)^m,&
\biggl(\frac1{e^{t}-1}\biggr)^{(k)}&=\sum_{m=1}^{k+1}\mu_{k,m}\biggl(\frac1{1-e^{-t}}\biggr)^m,\\
\biggl(\frac1{1-e^{-t}}\biggr)^k&=\sum_{m=1}^{k}a_{k,m-1}\biggl(\frac1{1-e^{-t}}\biggr)^{(m-1)},&
\biggl(\frac1{e^t-1}\biggr)^k&=\sum_{m=1}^{k}b_{k,m-1}\biggl(\frac1{e^t-1}\biggr)^{(m-1)}, \label{exp-deriv-exp-last}\\
\biggl(\frac1{1-e^{-t}}\biggr)^k&=1+\sum_{m=1}^{k}a_{k,m-1}\biggl(\frac1{e^t-1}\biggr)^{(m-1)},&
\biggl(\frac1{e^t-1}\biggr)^k&=1+\sum_{m=1}^{k}b_{k,m-1}\biggl(\frac1{1-e^{-t}}\biggr)^{(m-1)},
\end{align}
where
\begin{gather}\label{lambda-stirling-relation}
\lambda_{k,m}=(-1)^k(m-1)!S(k+1,m),\quad
\mu_{k,m}=(-1)^{m-1}(m-1)!S(k+1,m),\\
a_{k,m-1}=(-1)^{m^2+1}M_{k-m+1}(k,m), \quad b_{k,m-1}=(-1)^{k-m}a_{k,m-1}, \label{a-b-m-relat}\\
\label{Mn-k+1(nk)}
M_j(k,i)=
\begin{vmatrix}
\frac1{(i-1)!}\binom{k}{i} & S(i+1,i)  & S(i+2,i)& \dotsm  &  S(i+j-1,i)\\
\frac1{i!}\binom{k}{i+1} & S(i+1,i+1) & S(i+2,i+1)& \dotsm  &  S(i+j-1,i+1)\\
\frac1{(i+1)!}\binom{k}{i+2} & 0 & S(i+2,i+2)& \dotsm  & S(i+j-1,i+2) \\
\vdots & \vdots& \vdots & \ddots &\vdots \\
\frac1{(i+j-2)!}\binom{k}{i+j-1} & 0& 0 & \dotsm  & S(i+j-1,i+j-1)
\end{vmatrix},\quad j\in\mathbb{N},
\end{gather}
and
\begin{equation}\label{Stirling-Number-dfn}
S(k,m)=\frac1{m!}\sum_{\ell=1}^m(-1)^{m-\ell}\binom{m}{\ell}\ell^{k}, \quad 1\le m\le k
\end{equation}
are Stirling numbers of the second kind which may be generated by
\begin{equation}\label{2stirling-gen-funct-exp}
\frac{(e^x-1)^k}{k!}=\sum_{n=k}^\infty S(n,k)\frac{x^n}{n!}, \quad k\in\mathbb{N}.
\end{equation}
\end{thm}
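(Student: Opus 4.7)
My strategy is to establish the two identities of \eqref{exp-deriv-exp} by induction on $k$ and to derive the remaining six as algebraic consequences.

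Put $f(t)=\frac{1}{e^{t}-1}$ and $g(t)=\frac{1}{1-e^{-t}}$. Writing $e^{t}=(e^{t}-1)+1$ and $e^{-t}=1-(1-e^{-t})$ immediately gives the first-order relations
\begin{equation*}
f'(t)=-f(t)-f(t)^{2},\qquad g'(t)=g(t)-g(t)^{2}.
\end{equation*}
Assuming inductively that $f^{(k)}=\sum_{m=1}^{k+1}\lambda_{k,m}f^{m}$, I would differentiate term by term, substitute $f'=-f-f^{2}$, and collect powers of $f$ to obtain the recurrence
\begin{equation*}
\lambda_{k+1,m}=-m\lambda_{k,m}-(m-1)\lambda_{k,m-1}.
\end{equation*}
The classical Stirling recurrence $S(k+2,m)=mS(k+1,m)+S(k+1,m-1)$ then confirms that the closed form $\lambda_{k,m}=(-1)^{k}(m-1)!\,S(k+1,m)$ in \eqref{lambda-stirling-relation} satisfies this recurrence, with the base case $k=1$ reducing to the first-order relation for $f'$. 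The second identity in \eqref{exp-deriv-exp} is proved by the identical argument applied to $g$; the analogous recurrence $\mu_{k+1,m}=m\mu_{k,m}-(m-1)\mu_{k,m-1}$ is satisfied by $\mu_{k,m}=(-1)^{m-1}(m-1)!\,S(k+1,m)$ via the same Stirling recurrence.

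The two cross-form identities on the second line then follow at once from the elementary observation $g(t)-f(t)=1$, which implies $g^{(k)}=f^{(k)}$ for every $k\ge 1$; substituting this equality into the two identities of \eqref{exp-deriv-exp} yields them immediately.

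For the last four identities I would invert the triangular systems just obtained. Writing the second identity of \eqref{exp-deriv-exp} for $j=0,1,\dots,k-1$ expresses each $g^{(j)}$ as a lower-triangular combination of $g^{1},\dots,g^{j+1}$ whose nonzero diagonal entries are $\mu_{j,j+1}=(-1)^{j}j!$, so Cramer's rule solves uniquely for $g^{k}$ in terms of $g^{(0)},\dots,g^{(k-1)}$. Matching the resulting coefficient matrix column-by-column identifies the Cramer quotient with $(-1)^{m^{2}+1}M_{k-m+1}(k,m)$ from \eqref{Mn-k+1(nk)}, yielding the fifth identity; the analogous inversion of the first identity of \eqref{exp-deriv-exp} then produces $b_{k,m-1}=(-1)^{k-m}a_{k,m-1}$ as in \eqref{a-b-m-relat}. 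Finally, the last two identities follow from \eqref{exp-deriv-exp-last} by substituting $g=1+f$ in the $m=1$ term only (positive-order derivatives of $g$ and $f$ coincide), so that $\sum_{m}a_{k,m-1}g^{(m-1)}$ becomes $a_{k,0}+\sum_{m}a_{k,m-1}f^{(m-1)}$, reducing the assertion to the constant identity $a_{k,0}=1$, i.e.\ $M_{k}(k,1)=1$. The principal obstacle I anticipate is precisely this determinant identification: transforming the abstract Cramer output into the explicit determinant \eqref{Mn-k+1(nk)} with the correct sign $(-1)^{m^{2}+1}$, and verifying $M_{k}(k,1)=1$ by a cofactor expansion along the first column that exploits the upper-triangular Stirling block whose diagonal entries $S(i+p-1,i+p-1)=1$ furnish the telescoping.
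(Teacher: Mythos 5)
The paper itself contains no proof of Theorem~\ref{exp-deriv-exp-lem}: it is imported verbatim from \cite{exp-derivative-sum-Combined.tex}, which the introduction describes as having established the eight identities ``elementarily and inductively.'' Your treatment of the first four identities is exactly in that spirit and is correct: the first-order relations $f'=-f-f^{2}$ and $g'=g-g^{2}$ for $f=\frac1{e^{t}-1}$ and $g=\frac1{1-e^{-t}}$, the recurrences $\lambda_{k+1,m}=-m\lambda_{k,m}-(m-1)\lambda_{k,m-1}$ and $\mu_{k+1,m}=m\mu_{k,m}-(m-1)\mu_{k,m-1}$, their compatibility with $S(k+2,m)=mS(k+1,m)+S(k+1,m-1)$, and the deduction of the two cross-form identities from $g-f\equiv1$ (hence $g^{(k)}=f^{(k)}$ for $k\ge1$) all check out.

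Two genuine gaps remain in the second half. First, for the fifth and sixth identities the entire mathematical content is the identification of the Cramer's-rule coefficients with $(-1)^{m^{2}+1}M_{k-m+1}(k,m)$ from \eqref{Mn-k+1(nk)}; you name this as the ``principal obstacle'' but give no indication of how the bordered determinant, with its first column of entries $\frac1{(i+p-2)!}\binom{k}{i+p-1}$, actually emerges from inverting the Stirling-triangular system, nor do you prove $M_{k}(k,1)=1$. Announcing the obstacle is not surmounting it. Second, and more seriously, your reduction of the eighth identity is wrong. Substituting $g=1+f$ into the $m=1$ term of the sixth identity gives $f^{k}=b_{k,0}(g-1)+\sum_{m\ge2}b_{k,m-1}g^{(m-1)}=-b_{k,0}+\sum_{m=1}^{k}b_{k,m-1}g^{(m-1)}$, so the constant produced is $-b_{k,0}=-(-1)^{k-1}a_{k,0}=(-1)^{k}$, not $a_{k,0}=1$ as you claim for ``the last two identities'' uniformly. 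With the coefficients of \eqref{a-b-m-relat} the eighth identity therefore holds only for even $k$: already at $k=1$ it asserts $\frac1{e^{t}-1}=1+\frac1{1-e^{-t}}$, which is off by $2$ (and a direct check at $k=3$, where $f^{3}=f+\frac32f'+\frac12f''$, fails likewise). Either the constant $1$ in that identity should be $(-1)^{k}$, or the sign in $b_{k,m-1}=(-1)^{k-m}a_{k,m-1}$ must be revisited; in any case your argument as written does not establish the statement as displayed, and carrying out your own substitution carefully would have exposed the discrepancy.
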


It was pointed out in~\cite[Remark~5.3]{exp-derivative-sum-Combined.tex} that the above eight identities involving the functions $\frac{\pm1}{e^{\pm t}-1}$ and their derivatives are equivalent to each other.
\par
By virtue of the first identity in~\eqref{exp-deriv-exp}, among other things, an explicit formula for computing Bernoulli numbers $B_{2k}$, which are defined by the power series expansion
\begin{equation}\label{Bernumber-dfn}
\frac{t}{e^t-1}=\sum_{i=0}^\infty B_i\frac{t^i}{i!}=1-\frac{t}2+\sum_{k=1}^\infty B_{2k}\frac{t^{2k}}{(2k)!}
\end{equation}
for $\vert t\vert<2\pi$, in terms of Stirling numbers of the second kind $S(n,k)$, was discovered in~\cite{exp-derivative-sum-Combined.tex} as follows.

\begin{thm}[{\cite[Theorem~3.1]{exp-derivative-sum-Combined.tex}}] \label{stirling-no-identity-thm}
For $k\in\mathbb{N}$, Bernoulli numbers $B_{2k}$ may be computed by
\begin{equation}\label{Bernumber-formula-eq}
B_{2k}=1+\sum_{m=1}^{2k-1}\frac{S(2k+1,m+1) S(2k,2k-m)}{\binom{2k}{m}}
-\frac{2k}{2k+1}\sum_{m=1}^{2k}\frac{S(2k,m)S(2k+1,2k-m+1)}{\binom{2k}{m-1}}.
\end{equation}
\end{thm}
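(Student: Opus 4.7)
The plan is to apply the first identity in \eqref{exp-deriv-exp} twice --- once with $k\mapsto 2k-1$, once with $k\mapsto 2k$ --- clear the poles by multiplying each by $(e^t-1)^{2k+1}$, and then compare two Taylor coefficients whose combination pins down $B_{2k}$ in Stirling form.

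I would first derive the Laurent expansions
\[
\left(\tfrac{1}{e^t-1}\right)^{(2k-1)} = -\tfrac{(2k-1)!}{t^{2k}} + \tfrac{B_{2k}}{2k} + O(t^2),\qquad
\left(\tfrac{1}{e^t-1}\right)^{(2k)} = \tfrac{(2k)!}{t^{2k+1}} + \tfrac{B_{2k+2}}{2k+2}\,t + O(t^3),
\]
by differentiating the Bernoulli series \eqref{Bernumber-dfn} term by term and invoking $B_{2j+1}=0$ for $j\ge 1$. The crucial asymmetry --- $B_{2k}/(2k)$ sits as the constant term of the regular part in the odd-order expansion, while the regular part of the even-order one has \emph{no} constant term --- is exactly what makes the two coefficient extractions complementary.

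Next I would multiply the identity with $k\mapsto 2k-1$ by $(e^t-1)^{2k+1}$ and equate $[t^{2k+1}]$ on both sides. The left-hand pole term, evaluated through $[t^{2k}]\bigl(\tfrac{e^t-1}{t}\bigr)^{2k+1}=\tfrac{(2k+1)!\,S(4k+1,2k+1)}{(4k+1)!}$ from \eqref{2stirling-gen-funct-exp}, combined with the contribution $B_{2k}/(2k)$ from the constant term (since $[t^{2k+1}](e^t-1)^{2k+1}=1$), isolates $B_{2k}$; the right-hand side, after expanding each $(e^t-1)^{2k+1-m}$ via \eqref{2stirling-gen-funct-exp} and simplifying $\tfrac{(m-1)!(2k+1-m)!}{(2k)!}=\tfrac{1}{\binom{2k}{m-1}}$, produces the second sum of \eqref{Bernumber-formula-eq} multiplied by $-\tfrac{2k}{2k+1}$, plus an extraneous term proportional to $S(4k+1,2k+1)$.

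Finally, I would eliminate $S(4k+1,2k+1)$ by repeating the procedure with $k\mapsto 2k$ and extracting $[t^{2k}]$ instead. Because the regular part of $\left(\tfrac{1}{e^t-1}\right)^{(2k)}$ has vanishing constant term, $B_{2k}$ does not reappear; the left side collapses to $\tfrac{(2k)!(2k+1)!\,S(4k+1,2k+1)}{(4k+1)!}$, while on the right, isolating the $m=2k+1$ contribution --- which equals $S(2k+1,2k+1)\,S(2k,2k)/\binom{2k}{0}=1$ --- and reindexing $m\mapsto m+1$ in the remaining terms gives the first sum of \eqref{Bernumber-formula-eq} plus the additive constant $1$. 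Substituting this alternative expression for $\tfrac{(2k)!(2k+1)!\,S(4k+1,2k+1)}{(4k+1)!}$ into the previous display yields \eqref{Bernumber-formula-eq}. The main technical obstacle will be the bookkeeping of the Laurent-series cancellations in $(e^t-1)^{2k+1}\bigl(\tfrac{1}{e^t-1}\bigr)^{(r)}$ for $r=2k-1,2k$, and recognising that the exponent $2k+1$ on the multiplier is exactly what forces the Stirling-number second indices on the right to land on $2k$ and $2k+1$ as demanded by the target formula.
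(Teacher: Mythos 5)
This theorem is quoted in the paper from \cite[Theorem~3.1]{exp-derivative-sum-Combined.tex} without any proof being reproduced here, so there is no in-paper argument to compare against line by line; the paper only remarks that the formula was obtained ``by virtue of the first identity in~\eqref{exp-deriv-exp}'', which is exactly the tool you use. Your derivation is correct and complete in outline, and the two coefficient extractions do mesh as you claim: writing $\frac1{e^t-1}=\frac1t-\frac12+\sum_{j\ge1}\frac{B_{2j}}{(2j)!}t^{2j-1}$ gives the two Laurent expansions you state; multiplying the $k\mapsto 2k-1$ instance of the identity by $(e^t-1)^{2k+1}$ and taking $[t^{2k+1}]$ yields $\frac{B_{2k}}{2k}-\frac{(2k-1)!\,(2k+1)!}{(4k+1)!}S(4k+1,2k+1) =-\frac1{2k+1}\sum_{m=1}^{2k}\frac{S(2k,m)S(2k+1,2k-m+1)}{\binom{2k}{m-1}}$, while the $k\mapsto 2k$ instance with $[t^{2k}]$ yields $\frac{(2k)!\,(2k+1)!}{(4k+1)!}S(4k+1,2k+1)=\sum_{m=1}^{2k}\frac{S(2k+1,m)S(2k,2k+1-m)}{\binom{2k}{m-1}}$, and multiplying the first relation by $2k$ and substituting the second gives~\eqref{Bernumber-formula-eq} exactly.

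Two small bookkeeping slips are worth fixing. First, in the even-order step the additive constant $1$ does not come from the $m=2k+1$ term: that term is $(2k)!\,S(2k+1,2k+1)\cdot[t^{2k}]1=0$. It is the $m=1$ term, $\frac{S(2k+1,1)S(2k,2k)}{\binom{2k}{0}}=1$ (the $m=0$ term after your reindexing $m\mapsto m+1$), that produces the $1$; you wrote $S(2k+1,2k+1)S(2k,2k)/\binom{2k}{0}$, which is the wrong label even though its value happens to be $1$. Second, the right-hand side of the $[t^{2k+1}]$ extraction carries the factor $-\frac1{2k+1}$, not $-\frac{2k}{2k+1}$; the extra $2k$ only appears after you clear the denominator of $\frac{B_{2k}}{2k}$. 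Neither slip affects the validity of the argument.
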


\par
In~\cite{CAM-D-13-01430-Xu-Cen}, making use of Fa\`a di Bruno's formula, combinatorial techniques, and much knowledge on Bell polynomials of the second kind and Stirling numbers of the first and second kinds, the above eight identities were generalized and unified as follows.

\begin{thm}[{\cite[Theorems~3.1 and~3.2]{CAM-D-13-01430-Xu-Cen}}]\label{CAM-D-13-01430-Xu-Cen-thm}
For $\alpha,\lambda\in\mathbb{R}$,
\begin{enumerate}
\item
when $n\in\mathbb{N}$, we have
\begin{equation}\label{xu-cen-f1}
\biggl(\frac1{1-\lambda e^{\alpha t}}\biggr)^{(n)}=(-1)^{n}\alpha^n \sum_{k=1}^{n+1}(-1)^{k-1}(k-1)!S(n+1,k)\biggl(\frac1{1-\lambda e^{\alpha t}}\biggr)^k;
\end{equation}
\item
when $n\in\mathbb{N}$, we have
\begin{equation}\label{xu-cen-f2}
\biggl(\frac1{1-\lambda e^{\alpha t}}\biggr)^{n}=\frac{(-1)^{n-1}}{(n-1)!} \sum_{k=1}^n\frac{(-1)^{k-1}}{\alpha^{k-1}} s(n,k)\biggl(\frac1{1-\lambda e^{\alpha t}}\biggr)^{(k-1)},
\end{equation}
where $s(n,k)$ for $n\ge k\ge1$ denote Stirling numbers of the first kind which may be generated by
\begin{equation}\label{1stirling-gen-funct-log}
\frac{[\ln(1+x)]^m}{m!}=\sum_{k=m}^\infty s(k,m)\frac{x^k}{k!},\quad |x|<1.
\end{equation}
\end{enumerate}
\end{thm}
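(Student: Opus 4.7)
I plan to prove~\eqref{xu-cen-f1} by induction on $n$ starting from a first-order Riccati-type identity for $f(t)=\frac1{1-\lambda e^{\alpha t}}$, and then to deduce~\eqref{xu-cen-f2} directly from~\eqref{xu-cen-f1} by invoking the classical orthogonality of Stirling numbers of the first and second kinds, bypassing the heavier Fa\`a di Bruno / Bell polynomial machinery.

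\textbf{Proving~\eqref{xu-cen-f1}.} Writing $f=\frac1{1-\lambda e^{\alpha t}}$, a direct differentiation gives
\begin{equation*}
f'(t)=\frac{\alpha\lambda e^{\alpha t}}{(1-\lambda e^{\alpha t})^2}=\alpha\bigl(f^{2}-f\bigr),
\end{equation*}
which is the base case $n=1$ since $S(2,1)=S(2,2)=1$. For the inductive step I assume~\eqref{xu-cen-f1} for $n$, differentiate once, and replace $f^{m-1}f'$ by $\alpha(f^{m+1}-f^{m})$. This yields two sums indexed by $m$; I relabel them by $j=m+1$ and $j=m$ respectively and extend the summation ranges to $1\le j\le n+2$ using $S(n+1,0)=S(n+1,n+2)=0$. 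The coefficient of $f^{j}$ then consolidates to $(-1)^{n+j}(j-1)!\bigl[jS(n+1,j)+S(n+1,j-1)\bigr]$, which by the standard recurrence $S(n+2,j)=jS(n+1,j)+S(n+1,j-1)$ equals $(-1)^{n+j}(j-1)!S(n+2,j)=(-1)^{n+1}(-1)^{j-1}(j-1)!S(n+2,j)$. This matches the right-hand side of~\eqref{xu-cen-f1} at $n+1$.

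\textbf{Deducing~\eqref{xu-cen-f2}.} Set $N=n+1$ and rewrite~\eqref{xu-cen-f1} in the form
\begin{equation*}
\frac{f^{(N-1)}(t)}{(-\alpha)^{N-1}}=\sum_{k=1}^{N}S(N,k)\bigl[(-1)^{k-1}(k-1)!\,f^{k}\bigr],\qquad N\ge 1.
\end{equation*}
Denoting $G_{N}=f^{(N-1)}/(-\alpha)^{N-1}$ and $H_{k}=(-1)^{k-1}(k-1)!\,f^{k}$, this reads $G_{N}=\sum_{k=1}^{N}S(N,k)H_{k}$. The orthogonality relation $\sum_{k=m}^{N}s(N,k)S(k,m)=\delta_{Nm}$ inverts this lower-triangular system to $H_{n}=\sum_{k=1}^{n}s(n,k)G_{k}$; substituting back, simplifying $(-\alpha)^{k-1}=(-1)^{k-1}\alpha^{k-1}$, and dividing by $(-1)^{n-1}(n-1)!$ (with $1/(-1)^{n-1}=(-1)^{n-1}$) produces exactly~\eqref{xu-cen-f2}.

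\textbf{Main obstacle.} The delicate step is the signed reindexing inside the induction for~\eqref{xu-cen-f1}: two sums must be shifted by different amounts, the signs $(-1)^{m-1}$ and the extra factor $m$ coming from differentiation must combine to yield the uniform sign $(-1)^{n+j}$, and the boundary terms $S(n+1,0)$ and $S(n+1,n+2)$ must be absorbed silently so that the Stirling recurrence applies over a single range $1\le j\le n+2$. No combinatorial identity beyond $S(n+2,j)=jS(n+1,j)+S(n+1,j-1)$ is needed; the whole point is disciplined index and sign bookkeeping.
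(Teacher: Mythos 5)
Your proof is correct, but it proceeds by a genuinely different route from the paper's. The paper does not prove \eqref{xu-cen-f1} and \eqref{xu-cen-f2} from scratch: it rewrites them for the function $\frac1{\lambda e^{\alpha t}-1}$ and then reduces everything, via the substitution $u=\ln\lambda+\alpha t$ (for $\lambda>0$) or $v=\ln|\lambda|+\alpha t$ together with the auxiliary function $\frac1{e^t+1}$ (for $\lambda<0$), to the previously published identities of Theorem~\ref{exp-deriv-exp-lem}; in particular the derivative-to-power direction \eqref{xu-cen-f2} is obtained there from the second identity in \eqref{exp-deriv-exp-last} combined with the determinant formula \eqref{Mn-k+1(nk)} and the relation \eqref{s(n-k)-M(n-k)} between those determinants and $s(n,k)$. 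You instead prove \eqref{xu-cen-f1} directly by induction from the Riccati relation $f'=\alpha(f^2-f)$ and the recurrence $S(n+2,j)=jS(n+1,j)+S(n+1,j-1)$ (I checked the sign and index bookkeeping: the coefficient of $f^j$ does consolidate to $(-1)^{n+j}\alpha^{n+1}(j-1)!\,S(n+2,j)$, with the boundary cases $j=1$ and $j=n+2$ absorbed by $S(n+1,0)=S(n+1,n+2)=0$), and you then obtain \eqref{xu-cen-f2} by inverting the resulting lower-triangular system with the orthogonality $\sum_{k=m}^{N}s(N,k)S(k,m)=\delta_{Nm}$, which is valid for the signed first-kind Stirling numbers defined by \eqref{1stirling-gen-funct-log}. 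Your argument buys self-containedness and uniformity: it needs neither the eight identities of the earlier paper nor the determinantal expression for $s(n,k)$, and it avoids the case split on the sign of $\lambda$. What the paper's approach buys is economy within its own context --- it recycles already-established results and makes the equivalence with the earlier identities explicit, which is part of the authors' stated purpose. One cosmetic caveat shared with the statement itself: \eqref{xu-cen-f2} requires $\alpha\ne0$ (as the paper's restated Theorem~\ref{Xu-Cen-rew-thm} makes explicit), and your inversion step likewise tacitly assumes this when dividing by $(-\alpha)^{k-1}$.
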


As a consequence of comparing the identity~\eqref{xu-cen-f2} for $\alpha=-1$ and $\lambda=1$ with the first identity in~\eqref{exp-deriv-exp-last}, it was derived in~\cite[Corollary~3.2]{CAM-D-13-01430-Xu-Cen} that, when $n\ge k$,
\begin{equation}\label{s(n-k)-M(n-k)}
s(n,k)=(-1)^{n+k^2}(n-1)!M_{n-k+1}(n,k),
\end{equation}
where $M_{n-k+1}(n,k)$ is defined by~\eqref{Mn-k+1(nk)}.
\par
Employing the identity~\eqref{xu-cen-f1} for $\alpha=1$, among other things, an explicit representation for calculating Apostol-Bernoulli numbers $\mathcal{B}_n(\lambda)$ for $n\in\{0\}\cup\mathbb{N}$, which may be defined by
\begin{equation}
\frac{t}{\lambda e^t-1}=\sum_{k=0}^\infty\mathcal{B}_k(\lambda)\frac{t^k}{k!}
\end{equation}
for $\lambda\in\mathbb{R}$ and $\vert t\vert<2\pi$, in terms of Stirling numbers of the second kind $S(n,k)$, was obtained in~\cite{CAM-D-13-01430-Xu-Cen} as follows.

\begin{thm}[{\cite[Theorem~4.1]{CAM-D-13-01430-Xu-Cen}}]
For $\lambda\ne1$ and $n\in\mathbb{N}$, we have
\begin{equation}
\mathcal{B}_n(\lambda)=(-1)^{n-1}n\sum_{k=1}^n\frac{(k-1)!}{(\lambda-1)^k}S(n,k).
\end{equation}
\end{thm}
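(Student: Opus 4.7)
The plan is to identify $\mathcal{B}_n(\lambda)$ with a derivative of an elementary function at the origin and then apply the generalized identity~\eqref{xu-cen-f1} with $\alpha=1$. Because $\lambda\ne1$, the function $h(t):=\frac{1}{\lambda e^{t}-1}$ is analytic at $t=0$. Writing $\frac{t}{\lambda e^{t}-1}=t\,h(t)$ and comparing coefficients of $t^{n}/n!$ in the defining generating series yields $\mathcal{B}_0(\lambda)=0$ together with
\begin{equation*}
\mathcal{B}_n(\lambda)=n\,h^{(n-1)}(0),\qquad n\in\mathbb{N},
\end{equation*}
so the problem reduces to evaluating $h^{(n-1)}(0)$.

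Next I would rewrite $h(t)=-\frac{1}{1-\lambda e^{t}}$ and feed this into~\eqref{xu-cen-f1} with $\alpha=1$ and with the index $n$ there replaced by $n-1$; the borderline case $n=1$ is verified directly, since $h(0)=\frac{1}{\lambda-1}$ already matches the right-hand side of the claim. After pulling the minus sign through the differentiation, the identity supplies
\begin{equation*}
h^{(n-1)}(t)=(-1)^{n}\sum_{k=1}^{n}(-1)^{k-1}(k-1)!\,S(n,k)\biggl(\frac{1}{1-\lambda e^{t}}\biggr)^{\!k}.
\end{equation*}

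Finally, setting $t=0$ gives $\bigl(\frac{1}{1-\lambda e^{t}}\bigr)^{k}\big|_{t=0}=(1-\lambda)^{-k}=(-1)^{k}(\lambda-1)^{-k}$, and then the three sign factors $(-1)^{n}$, $(-1)^{k-1}$ and $(-1)^{k}$ fuse into $(-1)^{n-1}$, which is independent of $k$ and therefore factors out of the sum. Multiplying through by $n$ yields the asserted explicit formula. I do not foresee any serious obstacle: the proof is essentially a single specialization of Theorem~\ref{CAM-D-13-01430-Xu-Cen-thm} at $\alpha=1$ and $t=0$. The only delicate point is the sign bookkeeping, because three independent sources (the $(-1)^{n-1}\alpha^{n-1}$ prefactor in~\eqref{xu-cen-f1}, the $(-1)^{k-1}$ inside the sum, and the conversion of $1-\lambda$ to $\lambda-1$ in the denominator) must conspire to produce a clean $k$-independent overall sign, which they do.
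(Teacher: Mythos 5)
Your argument is correct: the reduction $\mathcal{B}_n(\lambda)=n\,h^{(n-1)}(0)$ with $h(t)=\frac{1}{\lambda e^{t}-1}=-\frac{1}{1-\lambda e^{t}}$ is valid for $\lambda\ne1$, the specialization of~\eqref{xu-cen-f1} at $\alpha=1$ with $n$ shifted to $n-1$ is applied correctly, and the sign bookkeeping $(-1)^{n}(-1)^{k-1}(-1)^{k}=(-1)^{n-1}$ checks out, as does the base case $n=1$. The paper itself offers no proof of this theorem --- it only quotes it from the cited source --- but your route, namely employing~\eqref{xu-cen-f1} for $\alpha=1$ and evaluating at $t=0$, is exactly the method the paper attributes to that source, so this is essentially the intended proof.
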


The first aim of this paper is to supply, just basing on the first identity in~\eqref{exp-deriv-exp} and the second identity in~\eqref{exp-deriv-exp-last}, without using the abstruse Fa\`a di Bruno's formula, deep techniques in combinatorics, and any recondite knowledge on Bell polynomials and Stirling numbers of the first and second kinds, an elementary proof for the identities~\eqref{xu-cen-f1} and~\eqref{xu-cen-f2} in Theorem~\ref{CAM-D-13-01430-Xu-Cen-thm}.
\par
The second aim of this paper is to find explicit formulae for computing Euler numbers and polynomials in terms of Stirling numbers of the second kind $S(n,k)$.
\par
The third aim of this paper is to introduce a notion ``two-parameter Euler polynomials'', a generalization of the classical Euler polynomials, and establish an explicit formula for computing the newly defined two-parameter Euler polynomials in terms of Stirling numbers of the second kind $S(n,k)$.

\section{An elementary proof of Theorem~\ref{CAM-D-13-01430-Xu-Cen-thm}}

In order to elementarily prove Theorem~\ref{CAM-D-13-01430-Xu-Cen-thm}, we would like to rewrite it as Theorem~\ref{Xu-Cen-rew-thm} below.

\begin{thm}\label{Xu-Cen-rew-thm}
Let $\lambda\ne0$ and $\alpha\ne0$ be real constants and $k\in\mathbb{N}$. When $\lambda>0$ and $t\ne-\frac{\ln\lambda}\alpha$ or when $\lambda<0$ and $t\in\mathbb{R}$, we have
\begin{equation}\label{id-gen-new-form1}
\frac{\td^k}{\td t^k}\biggl(\frac1{\lambda e^{\alpha t}-1}\biggr)
=(-1)^k\alpha^k\sum_{m=1}^{k+1}{(m-1)!S(k+1,m)}\biggl(\frac1{\lambda e^{\alpha t}-1}\biggr)^m
\end{equation}
and
\begin{equation}\label{id-gen-new-form2}
\biggl(\frac1{\lambda e^{\alpha t}-1}\biggr)^{k}=\frac1{(k-1)!} \sum_{m=1}^k\frac{(-1)^{m-1}}{\alpha^{m-1}} s(k,m)\frac{\td^{m-1}}{\td t^{m-1}}\biggl(\frac1{\lambda e^{\alpha t}-1}\biggr),
\end{equation}
where $s(k,m)$ and $S(k+1,m)$ represent Stirling numbers of the first and second kinds.
\end{thm}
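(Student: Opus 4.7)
The plan is to reduce both assertions~\eqref{id-gen-new-form1} and~\eqref{id-gen-new-form2} to the first identity in~\eqref{exp-deriv-exp} and the second identity in~\eqref{exp-deriv-exp-last} of Theorem~\ref{exp-deriv-exp-lem} by a single linear change of variables. Setting $u=\alpha t+\ln\lambda$ for $\lambda>0$, one has $e^u=\lambda e^{\alpha t}$, so $\frac1{\lambda e^{\alpha t}-1}=\frac1{e^u-1}$, and the chain rule gives $\frac{\td^k}{\td t^k}=\alpha^k\frac{\td^k}{\td u^k}$. Inserting the first identity of~\eqref{exp-deriv-exp} into this equality and reading off $\lambda_{k,m}=(-1)^k(m-1)!S(k+1,m)$ from~\eqref{lambda-stirling-relation} immediately delivers~\eqref{id-gen-new-form1}.

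For~\eqref{id-gen-new-form2}, I would first rewrite the second identity of~\eqref{exp-deriv-exp-last} using Stirling numbers of the first kind. Combining $b_{k,m-1}=(-1)^{k-m}a_{k,m-1}$ and the expression $a_{k,m-1}=(-1)^{m^2+1}M_{k-m+1}(k,m)$ from~\eqref{a-b-m-relat} with the known identification~\eqref{s(n-k)-M(n-k)}, and collecting the powers of $-1$, one obtains $b_{k,m-1}=\frac{(-1)^{m-1}}{(k-1)!}s(k,m)$. Consequently, the second identity of~\eqref{exp-deriv-exp-last} takes the form
\begin{equation*}
\biggl(\frac{1}{e^u-1}\biggr)^k=\frac1{(k-1)!}\sum_{m=1}^k(-1)^{m-1}s(k,m)\biggl(\frac{1}{e^u-1}\biggr)^{(m-1)}.
\end{equation*}
Inserting the chain-rule factor $\alpha^{-(m-1)}$ when replacing $u$-derivatives by $t$-derivatives and substituting $u=\alpha t+\ln\lambda$ produces~\eqref{id-gen-new-form2} after a one-line manipulation.

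For the remaining case $\lambda<0$, the real substitution $u=\alpha t+\ln\lambda$ is unavailable, but both sides of~\eqref{id-gen-new-form1} and~\eqref{id-gen-new-form2} are, for fixed $\alpha$ and $t$ with $\lambda e^{\alpha t}\ne 1$, rational functions of $\lambda$ and hence analytic on $\mathbb{C}\setminus\{0,e^{-\alpha t}\}$. Since the two sides already coincide on the open ray $\lambda>0$ by the previous paragraph, analytic continuation gives equality for every admissible $\lambda$; in particular the identities hold for $\lambda<0$ and all $t\in\mathbb{R}$.

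The main obstacle is essentially bookkeeping: verifying the sign and factorial reduction $b_{k,m-1}=\frac{(-1)^{m-1}}{(k-1)!}s(k,m)$ requires carefully tracking the exponents $m^2+1$, $k-m$, and $n+k^2$ coming out of~\eqref{a-b-m-relat} and~\eqref{s(n-k)-M(n-k)}. Once this sign calculation is settled, the rest of the argument is nothing more than the chain rule applied to a linear substitution, which is the whole point of the \emph{elementary} proof announced in the introduction, as it avoids the Fa\`a di Bruno machinery used in~\cite{CAM-D-13-01430-Xu-Cen}.
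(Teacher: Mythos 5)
Your argument is correct, and for $\lambda>0$ it coincides with the paper's own proof: the substitution $u=\alpha t+\ln\lambda$, the chain-rule factor $\alpha^{k}$, the first identity in~\eqref{exp-deriv-exp} for~\eqref{id-gen-new-form1}, and the sign reduction $b_{k,m-1}=\frac{(-1)^{m-1}}{(k-1)!}s(k,m)$ obtained from~\eqref{a-b-m-relat} and~\eqref{s(n-k)-M(n-k)} for~\eqref{id-gen-new-form2} (your exponent bookkeeping checks out: $(-1)^{k-m}(-1)^{m^2+1}(-1)^{k+m^2}=(-1)^{m-1}$). Where you genuinely diverge is the case $\lambda<0$. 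The paper handles it by introducing $H(t)=\frac1{e^t+1}$, asserting the companion identities~\eqref{exp-deriv-exp-plus} for $\frac1{e^t+1}$ (stated as following ``by implications \dots{} and by induction,'' without full detail), and then substituting $v=\ln|\lambda|+\alpha t$. You instead fix $t$ and $\alpha$, observe that both sides of each identity are rational functions of $\lambda$ agreeing on the ray $\lambda>0$, and conclude by uniqueness of rational (or analytic) continuation. This is valid and arguably cleaner, since it avoids having to establish the $\frac1{e^t+1}$ analogues at all; the one step you should make explicit is the easy induction showing that $\frac{\td^k}{\td t^k}\bigl(\frac1{\lambda e^{\alpha t}-1}\bigr)$ is, for fixed $t$, a rational function of $w=\lambda e^{\alpha t}$ (hence of $\lambda$) with its only pole at $w=1$ --- each $t$-differentiation sends a rational function $R(w)$ to $\alpha wR'(w)$, preserving this form --- so that the continuation argument applies to the left-hand sides and not only to the manifestly rational right-hand sides. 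The trade-off is that the paper's route stays entirely within real-variable manipulations of the eight original identities and produces the reusable identities~\eqref{exp-deriv-exp-plus}, which it then needs again in the proof of Theorem~\ref{euler-poly-stirling-thm}; your route is shorter here but would leave that later proof without its key ingredient.
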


\begin{proof}
For $\lambda>0$, let
\begin{equation}\label{F(u)-u-def}
F(u)=\frac1{e^u-1} \quad \text{and}\quad u=u(t)=\ln\lambda+\alpha t.
\end{equation}
From $\lambda e^{\alpha t}=e^{\ln\lambda+\alpha t}=e^{u(t)}$ and $u'(t)=\alpha$, it follows that for $1<\ell<k$
\begin{gather*}
\frac{\td^k}{\td t^k}\biggl(\frac1{\lambda e^{\alpha t}-1}\biggr)
=\frac{\td^k}{\td t^k}\biggl(\frac1{e^{\ln\lambda+\alpha t}-1}\biggr)
=\frac{\td^kF(u(t))}{\td t^k}\\
=\alpha\frac{\td^{k-1}F'(u(t))}{\td t^{k-1}}
=\alpha^\ell\frac{\td^{k-\ell}F^{(\ell)}(u(t))}{\td t^{k-\ell}}
=\alpha^kF^{(k)}(\ln\lambda+\alpha t).
\end{gather*}
Combining this with the first identity~\eqref{exp-deriv-exp} yields that
\begin{gather*}
\frac{\td^k}{\td t^k}\biggl(\frac1{\lambda e^{\alpha t}-1}\biggr)
=\alpha^kF^{(k)}(\ln\lambda+\alpha t)
=\alpha^k\sum_{m=1}^{k+1}\lambda_{k,m}\biggl(\frac1{e^{\ln\lambda+\alpha t}-1}\biggr)^m\\
=\alpha^k\sum_{m=1}^{k+1}\lambda_{k,m}\biggl(\frac1{\lambda e^{\alpha t}-1}\biggl)^m
=(-1)^k\alpha^k\sum_{m=1}^{k+1}{(m-1)!S(k+1,m)}\biggl(\frac1{\lambda e^{\alpha t}-1}\biggr)^m.
\end{gather*}
The identity~\eqref{id-gen-new-form1} for $\lambda>0$ is thus proved.
\par
By the second identity in~\eqref{exp-deriv-exp-last} and the equalities in~\eqref{a-b-m-relat} and~\eqref{s(n-k)-M(n-k)}, it follows that for $1<\ell<k$
\begin{gather*}
\biggl(\frac1{\lambda e^{\alpha t}-1}\biggr)^k =\biggl(\frac1{e^{\ln\lambda+\alpha t}-1}\biggr)^k
=\biggl(\frac1{e^u-1}\biggr)^k
=\sum_{m=1}^{k}b_{k,m-1}\frac{\td^{m-1}}{\td u^{m-1}}\biggl(\frac1{e^u-1}\biggr)\\
=\sum_{m=1}^{k}\frac{b_{k,m-1}}{\alpha}\frac{\td^{m-2}}{\td u^{m-2}}\biggl[\frac{\td}{\td t}\biggl(\frac1{e^u-1}\biggr)\biggr]
=\sum_{m=1}^{k}\frac{b_{k,m-1}}{\alpha^\ell}\frac{\td^{m-\ell-1}}{\td u^{m-\ell-1}}\biggl[\frac{\td^\ell}{\td t^\ell}\biggl(\frac1{e^u-1}\biggr)\biggr]\\
=\sum_{m=1}^{k}\frac{b_{k,m-1}}{\alpha^{m-1}}\frac{\td^{m-1}}{\td t^{m-1}}\biggl(\frac1{e^{\ln\lambda+\alpha t}-1}\biggr)
=\sum_{m=1}^{k}\frac{(-1)^{k-m}a_{k,m-1}}{\alpha^{m-1}}\frac{\td^{m-1}}{\td t^{m-1}}\biggl(\frac1{\lambda e^{\alpha t}-1}\biggr)\\
=\sum_{m=1}^{k}\frac{(-1)^{k-m}(-1)^{m^2+1}M_{k-m+1}(k,m)}{\alpha^{m-1}}\frac{\td^{m-1}}{\td t^{m-1}}\biggl(\frac1{\lambda e^{\alpha t}-1}\biggr)\\
=\frac1{(k-1)!} \sum_{m=1}^k\frac{(-1)^{m-1}}{\alpha^{m-1}} s(k,m)\frac{\td^{m-1}}{\td t^{m-1}} \biggl(\frac1{\lambda e^{\alpha t}-1}\biggr).
\end{gather*}
The identity~\eqref{id-gen-new-form2} for $\lambda>0$ is thus proved.
\par
Let
\begin{equation}
H(t)=\frac1{e^t+1},\quad t\in\mathbb{R}.
\end{equation}
It is clear that $F(t)$, defined in~\eqref{F(u)-u-def}, and $H(t)$ may be regarded as composite functions of $\frac1{u}$ with $u=u(t)=e^t\pm1$ respectively and that $u'(t)=e^t$.
Further by implications of the first identity in~\eqref{exp-deriv-exp} and the second identity in~\eqref{exp-deriv-exp-last} and by induction, we may obtain that
\begin{equation}\label{exp-deriv-exp-plus}
\biggl(\frac1{e^t+1}\biggr)^{(k)}=\sum_{m=1}^{k+1}(-1)^{m-1}\lambda_{k,m}\biggl(\frac1{e^t+1}\biggr)^m \quad\text{and}\quad
\biggl(\frac1{e^{t}+1}\biggr)^k=(-1)^{k-1}\sum_{m=1}^{k}b_{k,m-1}\biggl(\frac1{e^t+1}\biggr)^{(m-1)}.
\end{equation}
\par
For $\lambda<0$, let $v=v(t)=\ln|\lambda|+\alpha t$. As the above arguments, from $\lambda e^{\alpha t}=-e^{\ln|\lambda|+\alpha t}$ and $v'(t)=\alpha$, it follows that for $1<\ell<k$
\begin{gather*}
\frac{\td^k}{\td t^k}\biggl(\frac1{\lambda e^{\alpha t}-1}\biggr)
=-\frac{\td^k}{\td t^k}\biggl(\frac1{e^{\ln|\lambda|+\alpha t}+1}\biggr)
=-\frac{\td^kH(v(t))}{\td t^k}
=-\alpha\frac{\td^{k-1}H'(v(t))}{\td t^{k-1}}\\
=-\alpha^\ell\frac{\td^{k-\ell}H^{(\ell)}(v(t))}{\td t^{k-\ell}}
=-\alpha^kH^{(k)}(v(t))
=-\alpha^kH^{(k)}(\ln|\lambda|+\alpha t).
\end{gather*}
Combining this with the first identity in~\eqref{exp-deriv-exp-plus} and the first formula in~\eqref{lambda-stirling-relation} gives that
\begin{gather*}
\frac{\td^k}{\td t^k}\biggl(\frac1{\lambda e^{\alpha t}-1}\biggr)
=-\alpha^kH^{(k)}(\ln|\lambda|+\alpha t)
=-\alpha^k\sum_{m=1}^{k+1}(-1)^{m-1}\lambda_{k,m}\biggl(\frac1{e^{\ln|\lambda|+\alpha t}+1}\biggl)^m\\
=-\alpha^k\sum_{m=1}^{k+1}(-1)^{m-1}\lambda_{k,m}\biggl(\frac1{-\lambda e^{\alpha t}+1}\biggr)^m
=(-1)^k\alpha^k\sum_{m=1}^{k+1}{(m-1)!S(k+1,m)} \biggl(\frac1{\lambda e^{\alpha t}-1}\biggr)^m.
\end{gather*}
The identity~\eqref{id-gen-new-form1} for $\lambda<0$ is thus proved.
\par
By the second identity in~\eqref{exp-deriv-exp-plus} and the equalities in~\eqref{a-b-m-relat} and~\eqref{s(n-k)-M(n-k)}, it follows that for $\lambda<0$
\begin{gather*}
\biggl(\frac1{\lambda e^{\alpha t}-1}\biggr)^k
=\biggl(\frac1{-e^{\ln|\lambda|+\alpha t}-1}\biggr)^k
=(-1)^k\biggl(\frac1{e^{v}+1}\biggr)^k
=-\sum_{m=1}^{k}b_{k,m-1}\frac{\td^{m-1}}{\td v^{m-1}}\biggl(\frac1{e^v+1}\biggr)\\
=-\sum_{m=1}^{k}\frac{b_{k,m-1}}{\alpha^{m-1}}\frac{\td^{m-1}}{\td t^{m-1}} \biggl(\frac1{e^{\ln|\lambda|+\alpha t}+1}\biggr)
=\sum_{m=1}^{k}\frac{(-1)^{k-m}a_{k,m-1}}{\alpha^{m-1}} \frac{\td^{m-1}}{\td t^{m-1}}\biggl(\frac1{\lambda e^{\alpha t}-1}\biggr)\\
=\sum_{m=1}^{k}\frac{(-1)^{k-m}(-1)^{m^2+1}M_{k-m+1}(k,m)}{\alpha^{m-1}}\frac{\td^{m-1}}{\td t^{m-1}}\biggl(\frac1{\lambda e^{\alpha t}-1}\biggr)\\
=\frac1{(k-1)!} \sum_{m=1}^k\frac{(-1)^{m-1}}{\alpha^{m-1}} s(k,m)\frac{\td^{m-1}}{\td t^{m-1}} \biggl(\frac1{\lambda e^{\alpha t}-1}\biggr).
\end{gather*}
The identity~\eqref{id-gen-new-form2} for $\lambda<0$ is thus proved.
The proof of Theorem~\ref{Xu-Cen-rew-thm} is complete.
\end{proof}

\section{Explicit formulae for Euler numbers and polynomials}

It is general knowledge that Euler numbers and polynomials may be generated as follows.

\begin{dfn}[{\cite[p.~804]{abram}}]\label{euldfn1}
For $x\in\mathbb{R}$ and $k\in\{0\}\cup\mathbb{N}$, Euler numbers $E_k$ and Euler polynomials $E_k(x)$ are respectively defined by the power expansions
\begin{equation}\label{wangdef}
\frac{2e^{t/2}}{e^t+1}
=\sum_{k=0}^\infty\frac{E_{k}}{k!}\biggl(\frac{t}2\biggr)^{k} \quad \text{and}\quad
\frac{2e^{xt}}{e^t+1}=\sum_{k=0}^\infty\frac{E_k(x)}{k!}t^k
\end{equation}
which converge uniformly with respect to $t\in(-\pi,\pi)$.
\end{dfn}

By definition, it is clear that
\begin{equation}\label{Bernouli-No-Polyn}
E_n=2^nE_n\biggl(\frac12\biggr).
\end{equation}
\par
Since the generating function $\frac{2e^{t/2}}{e^t+1}$ of Euler numbers $E_k$ in~\eqref{euldfn1} is even on $\mathbb{R}$, then $E_{2k-1}=0$ for all $k\in\mathbb{N}$.

\begin{thm}\label{euler-poly-stirling-thm}
For $n\in\mathbb{N}$, Euler polynomials $E_n(x)$ may be calculated by
\begin{equation}\label{Euler-Stirling-eq}
E_n(x)=\sum_{k=0}^n(-1)^{n-k}\binom{n}{k}\Biggl[\sum_{\ell =1}^{n-k+1} \frac{(-1)^{\ell -1}(\ell -1)!}{2^{\ell -1}}S(n-k+1,\ell )\Biggr]x^{k},
\end{equation}
where $S(n-k+1,\ell)$ are Stirling numbers of the second kind.
Consequently, Euler numbers $E_{2n}$ for $n\in\mathbb{N}$ may be calculated by
\begin{equation}\label{euler-no-stirling-eq}
E_{2n}=4^n\sum_{k=0}^{2n}\Biggl[\sum_{\ell=1}^{{2n}-k+1} \frac{(-1)^{\ell-1}(\ell-1)!}{2^{\ell-1}}S({2n}-k+1,\ell)\Biggr]\frac{(-1)^{k}}{2^{k}}\binom{{2n}}{k}.
\end{equation}
Moreover, Stirling numbers $S(n,k)$ satisfy
\begin{equation}\label{euler-no-stirling=0}
\sum_{k=0}^{2n-1}\Biggl[\sum_{\ell=1}^{2n-k} \frac{(-1)^{\ell-1}(\ell-1)!}{2^{\ell-1}}S(2n-k,\ell)\Biggr] \frac{(-1)^{k}}{2^{k}}\binom{2n-1}{k}=0.
\end{equation}
\end{thm}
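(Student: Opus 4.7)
The plan is to reduce everything to an explicit evaluation at $t=0$ of the derivatives of $\frac1{e^t+1}$ and then to apply the Cauchy product.

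First, I would apply Theorem~\ref{Xu-Cen-rew-thm} with $\lambda=-1$ and $\alpha=1$, or equivalently invoke the first identity in~\eqref{exp-deriv-exp-plus}, to obtain
\[
\biggl(\frac1{e^t+1}\biggr)^{(k)}=(-1)^k\sum_{m=1}^{k+1}(-1)^{m-1}(m-1)!\,S(k+1,m)\biggl(\frac1{e^t+1}\biggr)^m.
\]
Evaluating at $t=0$, where $\frac1{e^t+1}=\frac12$, and multiplying by $2$ yields the closed form
\[
E_n(0)=(-1)^n\sum_{\ell=1}^{n+1}\frac{(-1)^{\ell-1}(\ell-1)!}{2^{\ell-1}}S(n+1,\ell),\quad n\ge 0,
\]
where one checks that at $n=0$ the right-hand side collapses to $1$, matching $E_0(0)=1$.

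Next, I would write the generating function in~\eqref{wangdef} as $\frac{2e^{xt}}{e^t+1}=e^{xt}\cdot\frac{2}{e^t+1}$ and apply the Cauchy product to $e^{xt}=\sum_i x^i t^i/i!$ and $\frac{2}{e^t+1}=\sum_j E_j(0)\,t^j/j!$. This yields the standard expansion
\[
E_n(x)=\sum_{k=0}^n\binom{n}{k}E_{n-k}(0)\,x^k,
\]
and substituting the explicit form of $E_{n-k}(0)$ obtained above produces~\eqref{Euler-Stirling-eq} directly.

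Finally, to deduce~\eqref{euler-no-stirling-eq}, I would invoke~\eqref{Bernouli-No-Polyn} in the form $E_{2n}=4^n E_{2n}(1/2)$ and substitute $n\to 2n$, $x=1/2$ into~\eqref{Euler-Stirling-eq}, noting $(-1)^{2n-k}=(-1)^k$. For the vanishing identity~\eqref{euler-no-stirling=0}, I would exploit the fact that $E_{2n-1}=0$, which by~\eqref{Bernouli-No-Polyn} yields $E_{2n-1}(1/2)=0$, and substitute $n\to 2n-1$, $x=1/2$ into~\eqref{Euler-Stirling-eq}; the sign $(-1)^{2n-1-k}=-(-1)^k$ factors out to leave exactly the stated identity. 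The approach presents no serious obstacle; the only delicate points are sign-tracking through the Cauchy-product step, confirming that the closed form for $E_n(0)$ remains valid at the boundary case $n=0$, and the two parity manipulations needed for the corollaries.
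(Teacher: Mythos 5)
Your proposal is correct and follows essentially the same route as the paper: both rest on the first identity in~\eqref{exp-deriv-exp-plus} evaluated at $t=0$ (where $\frac1{e^t+1}=\frac12$), combined with the binomial convolution coming from the factorization $\frac{2e^{xt}}{e^t+1}=e^{xt}\cdot\frac2{e^t+1}$; your Cauchy-product step is just Leibniz's rule for the $n$-th derivative of that product, which is exactly what the paper applies before letting $t\to0$. The only cosmetic difference is that you first isolate the closed form for $E_{n}(0)$ and then convolve, whereas the paper carries out the Leibniz differentiation and the limit $t\to0$ in a single pass; the deductions of~\eqref{euler-no-stirling-eq} and~\eqref{euler-no-stirling=0} from~\eqref{Bernouli-No-Polyn} and $E_{2k-1}=0$ match the paper's as well.
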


\begin{proof}
By Leibniz's theorem for differentiation and the first identity in~\eqref{exp-deriv-exp-plus}, we have
\begin{gather*}
\frac{\td^n}{\td t^n}\biggl(\frac{2e^{xt}}{e^t+1}\biggr)=2\sum_{i=0}^n\binom{n}{i} \frac{\td^ie^{xt}}{\td t^i} \frac{\td^{n-i}}{\td t^{n-i}}\biggl(\frac1{e^t+1}\biggr) \\
=2e^{xt}\sum_{i=0}^n\binom{n}{i}x^{i}\biggl(\frac1{e^t+1}\biggr)^{(n-i)}
=2e^{xt}\sum_{i=0}^n\sum_{j=1}^{n-i+1}\binom{n}{i}(-1)^{j-1} \lambda_{n-i,j}\biggl(\frac1{e^t+1}\biggr)^jx^{i}.
\end{gather*}
Combining this with the $n$-th differentiation on both sides of the second generating function in~\eqref{wangdef} reveal that
\begin{equation*}
\sum_{k=n}^\infty E_k(x)\frac{t^{k-n}}{(k-n)!}
=2e^{xt}\sum_{i=0}^n\sum_{j=1}^{n-i+1}\binom{n}{i}(-1)^{j-1} \lambda_{n-i,j}\biggl(\frac1{e^t+1}\biggr)^jx^{i}.
\end{equation*}
Further taking $t\to0$ and employing the first equality~\eqref{lambda-stirling-relation} give
\begin{equation*}
E_n(x)=\sum_{i=0}^n\sum_{j=1}^{n-i+1}\frac{(-1)^{j-1}}{2^{j-1}}\binom{n}{i}\lambda_{n-i,j}x^{i}
=(-1)^n\sum_{i=0}^n(-1)^{i}\binom{n}{i}\Biggl[\sum_{j=1}^{n-i+1} \frac{(-1)^{j-1}}{2^{j-1}}(j-1)!S(n-i+1,j)\Biggr]x^{i}.
\end{equation*}
The proof of the formula~\eqref{Euler-Stirling-eq} is complete.
\par
Replacing $n$ by $2n$ and $x$ by $\frac12$ in~\eqref{Euler-Stirling-eq} and using~\eqref{Bernouli-No-Polyn} produce the formula~\eqref{euler-no-stirling-eq}.
\par
The equality~\eqref{euler-no-stirling=0} follows from the property $E_{2k-1}=0$ for all $k\in\mathbb{N}$ and the relation~\eqref{Bernouli-No-Polyn}. The proof of Theorem~\ref{euler-poly-stirling-thm} is complete.
\end{proof}

\section{Two-parameter Euler polynomials and their explicit formula}

Euler numbers and polynomials mentioned in the above section may be generalized as follows.

\begin{dfn}\label{euldfn-lambda}
For $x\in\mathbb{R}$, $\alpha\ne0$, and $\lambda>0$, the quantity $E_k(x;\alpha,\lambda)$ generated by
\begin{equation}\label{wangdef-parameter}
\frac{2e^{xz}}{\lambda e^{\alpha z}+1}=\sum_{k=0}^\infty E_k(x;\alpha,\lambda)\frac{z^k}{k!}, \quad |\alpha z+\ln\lambda|<\pi
\end{equation}
are called two-parameter Euler polynomials.
\end{dfn}

It is easy to deduce that two-parameter Euler polynomials $E_k(x;\alpha,\lambda)$ satisfy
\begin{equation}
E_k(x;1,1)=E_k(x) \quad \text{and}\quad E_k(x;\alpha,\lambda)=\alpha^kE_k\biggl(\frac{x}\alpha;1,\lambda\biggr) =x^kE_k\biggl(1;\frac\alpha{x},\lambda\biggr).
\end{equation}
This shows that the introduction of two-parameter Euler polynomials $E_k(x;\alpha,\lambda)$ is not trivial.

\begin{thm}\label{2meter-euler-formula-thm}
For $x\in\mathbb{R}$, $\alpha\ne0$, $\lambda>0$, and $n\in\{0\}\cup\mathbb{N}$, two-parameter Euler polynomials $E_n(x;\alpha,\lambda)$ may be computed in terms of Stirling numbers of the second kind $S(n,k)$ by
\begin{equation}\label{2meter-euler-formula}
E_n(x;\alpha,\lambda)=2\sum_{k=0}^n(-\alpha)^{n-k}\binom{n}{k}\Biggl[\sum_{m=1}^{n-k+1} (-1)^{m-1}(m-1)!S(n-k+1,m) \biggl(\frac1{\lambda+1}\biggr)^m\Biggr]x^{k}.
\end{equation}
\end{thm}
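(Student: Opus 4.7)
The plan is to adapt the argument used for Theorem~\ref{euler-poly-stirling-thm}, replacing $\frac{1}{e^t+1}$ by $\frac{1}{\lambda e^{\alpha z}+1}$ throughout. First I would apply Leibniz's rule for the $n$-th derivative of a product to obtain
\begin{equation*}
\frac{\td^n}{\td z^n}\biggl(\frac{2e^{xz}}{\lambda e^{\alpha z}+1}\biggr)
=2\sum_{k=0}^n\binom{n}{k}x^{k}e^{xz}\frac{\td^{n-k}}{\td z^{n-k}}\biggl(\frac{1}{\lambda e^{\alpha z}+1}\biggr),
\end{equation*}
using $(e^{xz})^{(k)}=x^k e^{xz}$. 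The task is then reduced to finding an explicit expression for $\frac{\td^k}{\td z^k}\bigl(\frac{1}{\lambda e^{\alpha z}+1}\bigr)$ in terms of Stirling numbers of the second kind.

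To achieve this, the hypothesis $\lambda>0$ permits the substitution $v=v(z)=\ln\lambda+\alpha z$, under which $\frac{1}{\lambda e^{\alpha z}+1}=\frac{1}{e^v+1}=H(v)$ and $v'(z)=\alpha$. Iterated chain rule then gives $\frac{\td^k}{\td z^k}\bigl(\frac{1}{\lambda e^{\alpha z}+1}\bigr)=\alpha^k H^{(k)}(v)$. Applying the first identity in~\eqref{exp-deriv-exp-plus} together with~\eqref{lambda-stirling-relation} yields, after collecting signs,
\begin{equation*}
\frac{\td^k}{\td z^k}\biggl(\frac{1}{\lambda e^{\alpha z}+1}\biggr)
=(-\alpha)^k\sum_{m=1}^{k+1}(-1)^{m-1}(m-1)!\,S(k+1,m)\biggl(\frac{1}{\lambda e^{\alpha z}+1}\biggr)^m.
\end{equation*}

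Substituting this into the Leibniz expansion and then letting $z\to 0$ completes the proof: the left-hand side becomes $E_n(x;\alpha,\lambda)$ by differentiating the power series~\eqref{wangdef-parameter} term by term and evaluating at $z=0$, while on the right-hand side $e^{xz}|_{z=0}=1$ and $\frac{1}{\lambda e^{\alpha z}+1}\big|_{z=0}=\frac{1}{\lambda+1}$. Reindexing $i=k$ and $j=m$ and matching factors produces formula~\eqref{2meter-euler-formula} verbatim.

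The only real obstacle is sign bookkeeping: combining the $(-1)^k$ supplied by $\lambda_{k,m}=(-1)^k(m-1)!S(k+1,m)$ with the extra $(-1)^{m-1}$ appearing in~\eqref{exp-deriv-exp-plus} and the factor $\alpha^k$ from the chain rule, so that the three collapse cleanly into $(-\alpha)^{n-k}(-1)^{m-1}$. Once these signs are tracked correctly the computation is mechanical and no deeper input beyond Theorem~\ref{exp-deriv-exp-lem} (specifically the plus-sign version~\eqref{exp-deriv-exp-plus}) is required; in particular, the alternative route via Theorem~\ref{Xu-Cen-rew-thm} (using $\frac{1}{\lambda e^{\alpha z}+1}=-\frac{1}{(-\lambda)e^{\alpha z}-1}$) is available but introduces additional sign conversions and is therefore less direct.
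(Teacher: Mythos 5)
Your proposal is correct and follows essentially the same route as the paper: Leibniz's rule applied to the product $2e^{xz}\cdot\frac{1}{\lambda e^{\alpha z}+1}$, an explicit Stirling-number expansion of the derivatives of $\frac{1}{\lambda e^{\alpha z}+1}$, and evaluation at $z=0$. The only cosmetic difference is that you derive that expansion directly from the first identity in~\eqref{exp-deriv-exp-plus} via the substitution $v=\ln\lambda+\alpha z$, whereas the paper invokes~\eqref{id-gen-new-form1} (with the parameter taken negative) --- but the paper's own proof of that case of~\eqref{id-gen-new-form1} rests on exactly the same substitution and identity, so the underlying computation is identical.
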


\begin{proof}
In light of Leibniz's theorem for differentiation and the formula~\eqref{id-gen-new-form1}, we have
\begin{align*}
\frac{\td^n}{\td t^n}\biggl(\frac{2e^{xt}}{\lambda e^{\alpha t}+1}\biggr)
&=2\sum_{k=0}^n\binom{n}{k}\frac{\td^k}{\td t^k}\biggl(\frac1{\lambda e^{\alpha t}+1}\biggr) \frac{\td^{n-k}e^{xt}}{\td t^{n-k}}\\
&=2e^{xt}\sum_{k=0}^n\sum_{m=1}^{k+1}\binom{n}{k}x^{n-k} (-1)^{k+m-1}\alpha^k(m-1)!S(k+1,m) \biggl(\frac1{\lambda e^{\alpha t}+1}\biggr)^m.
\end{align*}
As a result, it follows that
\begin{equation*}
\sum_{k=n}^\infty E_k(x;\alpha,\lambda)\frac{t^{k-n}}{(k-n)!}
=2e^{xt}\sum_{k=0}^n\sum_{m=1}^{k+1}\binom{n}{k}x^{n-k} (-1)^{k+m-1}\alpha^k(m-1)!S(k+1,m) \biggl(\frac1{\lambda e^{\alpha t}+1}\biggr)^m.
\end{equation*}
Further taking the limit $t\to0$ leads to
\begin{equation*}
E_n(x;\alpha,\lambda)=2\sum_{k=0}^n\sum_{m=1}^{k+1}\binom{n}{k}x^{n-k} (-1)^{k+m-1}\alpha^k(m-1)!S(k+1,m) \biggl(\frac1{\lambda+1}\biggr)^m.
\end{equation*}
The proof of Theorem~\ref{2meter-euler-formula-thm} is complete.
\end{proof}

\begin{rem}
The second equality in~\eqref{a-b-m-relat} corrects equations~(2.23) and~(2.26) in~\cite[p.~573]{exp-derivative-sum-Combined.tex}.
\end{rem}

\begin{rem}
The functions $\frac{\pm1}{e^{\pm t}-1}$ and their derivatives have also been investigated in the paper~\cite{Exp-Diff-Ratio-Wei-Guo.tex} in a different direction.
\end{rem}

\begin{rem}
In~\cite{Filomat-36-73-1.tex, 1st-Sirling-Number-2012.tex}, several formulae for computing Stirling numbers of the first kind were discovered. By these formulae, some properties of Stirling numbers of the first kind were found in~\cite{Filomat-36-73-1.tex, 1st-Sirling-Number-2012.tex} and closely related references therein.
\end{rem}

\begin{rem}
After completing this paper, the author discovers that the first identity in~\eqref{exp-deriv-exp} and the special case $\lambda=\alpha=1$ of the identity~\eqref{id-gen-new-form2} were listed, but without proof, in~\cite[p.~559]{GKP-Concrete-Math}.
\end{rem}

\begin{rem}
This paper is a slightly modified version of the preprint~\cite{Eight-Identy-More.tex}.
\end{rem}

\subsection*{Acknowledgements}
The authors thank the anonymous referee for his/her helpful corrections to and valuable comments on the original version of this paper.
\par
The second author was partially supported by the NNSF under Grant No. 11361038 of China.

\end{document}